\newtheorem{thm}{Theorem}
\newtheorem{lem}{Lemma}
\newtheorem{defn}{Definition}
\newtheorem{rmk}{Remark}
\newcommand{\sE}{{\mathcal E}}
\newcommand{\sF}{{\mathcal F}}
\newcommand{\sL}{{\mathcal L}}
\newcommand{\sO}{{\mathcal O}}
\newcommand{\ga}[2]{\begin{gather}\label{#1}#2 \end{gather}}
\title
{Stability of Frobenius direct images over surfaces}
\author{Congjun Liu;  Mingshuo Zhou}
\begin{document}


\begin{abstract}
Let $X$ be a smooth projective surface over an algebraically closed
field $k$ of characteristic $p> 0$ with ${\Omega_{X}^{1}}$
semistable and $\mu({\Omega_{X}^{1}})>0$. For any semistable (resp.
stable) bundle $W$ of rank $r$, we prove that $F_*W$ is semistable
(resp. stable) when $p\geq r(r-1)^2+1$.
\end{abstract}

\maketitle

\section{Introduction}
Let $X$ be a smooth projective variety of dimension $n$ over an
algebraically closed field $k$ with $\mathrm{char}(k)=p>0.$ The
absolute Frobenius morphism $F_{X}$ : $X\rightarrow X$ is induced by
$\mathcal O_{X} \rightarrow \mathcal O_{X}, f\mapsto f^p$. Let $F: X
\rightarrow X_{1}:=X\times_{k}k$ denote the relative Frobenius
morphism over $k$. This simple endomorphism of $X$ is of fundamental
importance in algebraic geometry over characteristic $p>0$. One of
the themes is to study its action on the geometric objects on $X$.

Recall that a torsion free sheaf $\mathcal{E}$ is called smeistable
(resp. stable) if $\mu(\mathcal{E}')\leq \mu(\mathcal{E})$ (resp.
$\mu(\mathcal{E}')< \mu(\mathcal{E})$) for any nontrivial proper
subsheaf, where $\mu({\mathcal{E}})$ is the slope of $\mathcal{E}$
(see Definition 1 in Section 2). Semistable sheaves are basic
constituents of torsion free sheaves in the sense that for any
torsion free sheaf $\mathcal{E}$ admits a unique filtration
$$\mathrm{HN}_{\bullet}(\mathcal{E}): 0=\mathrm{HN}_0(\mathcal{E})\subset \mathrm{HN}_1(\mathcal{E})\subset \cdots \subset\mathrm{HN}_{k}(\mathcal{E})=\mathcal{E},$$
which is the so called Harder-Narasimhan filtration, such that

(1) $\rm{gr}^{\mathrm{HN}}_i(\mathcal{E}):=
\mathrm{HN}_i(\mathcal{E})/\mathrm{HN}_{i-1}(\mathcal{E})$ $(1\leq i
\leq k)$ are semistable;

(2) $\mu(\rm{gr}_1^{\mathrm{HN}}(\mathcal{E}))>
\mu(\rm{gr}_2^{\mathrm{HN}}(\mathcal{E}))> \cdots
>\mu(\rm{gr}_{k}^{\mathrm{HN}}(\mathcal{E})).$

\noindent The rational number $\mathrm{I}(\mathcal{E}):=
\mu(\mathrm{gr}_1^{\mathrm{HN}}(\mathcal{E}))-\mu(\mathrm{gr}_{k}^{\mathrm{HN}}(\mathcal{E}))$,
which measures how far is a torsion free sheaf from being
semistable, is called the instability of $\mathcal{E}$. It is clear
that $\mathcal{E}$ is semistable  if and only if
$\mathrm{I}(\mathcal{E})=0$.

It is well known that $F_{\ast}$ preserves the stability of vector
bundles on curves of genus $g\geq 1$
(see\cite{MP},\cite{Sun1},\cite{Sun3}). For the high dimension case,
it is proved by X. Sun that instability of $F_{\ast}W$ is bounded by
instability of $W\otimes \mathrm{T}^{\ell}(\Omega_X^1)$ $(0\leq \ell
\leq n(p-1))$ for any vector bundle $W$ (see \cite{Sun1},
\cite{Sun3}), and a upper bound of the instability
$\mathrm{I}(W\otimes \mathrm{T}^{\ell}(\Omega_X^1))$ is given in \cite{gl}. Especially for a
surface $X$ with $\Omega_X^1$ semistable and $\mu(\Omega_X^1)>0$,
the stability of $F_{\ast}\sL$ for a line bundle $\sL$ is proved by
X. Sun (see \cite{Sun1}). But it is unknown weather $F_{\ast}$
preserves the stability of a high rank vector bundle over a smooth
projective surface. In this note, we prove that $F_*W$ is
semistable(resp. stable) when $W$ is semistable (resp. stable) with
some restriction on the characteristic $p$ as following:
\begin{thm}
Let $X$ be a smooth projective surface over an algebraically closed
field $k$ of characteristic $p$ with $\Omega_{X}^{1}$ semistable and
$\mu({\Omega_{X}^{1}})>0$. Let $W$ be a semistable (resp. stable)
vector bundle of rank $r$, then $F_*W$ is also semistable (resp.
stable) if $p\geq r(r-1)^2+1$.
\end{thm}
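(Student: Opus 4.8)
The strategy is to combine the two ingredients recalled in the introduction --- X.\ Sun's analysis of $F^{*}F_{*}W$ through the canonical filtration \cite{Sun1},\cite{Sun3}, and the bound of \cite{gl} on the instability of $W\otimes\mathrm{T}^{\ell}(\Omega^{1}_{X})$ --- and to verify that the hypothesis $p\ge r(r-1)^{2}+1$ is exactly what makes the resulting inequality close. First one reduces to the Frobenius pullback: $F^{*}F_{*}W$ carries the canonical connection $\nabla=\nabla_{\mathrm{can}}\colon F^{*}F_{*}W\to F^{*}F_{*}W\otimes\Omega^{1}_{X}$, and by Cartier's theorem the subsheaves $\mathcal{E}\subseteq F_{*}W$ are exactly the $F^{*}\mathcal{E}$ for $\nabla$-invariant subsheaves of $F^{*}F_{*}W$, with $\mu(F^{*}\mathcal{E})=p\,\mu(\mathcal{E})$ and $\mu(F^{*}F_{*}W)=p\,\mu(F_{*}W)$; hence $F_{*}W$ is semistable (resp.\ stable) iff every nonzero proper $\nabla$-invariant $V\subsetneq F^{*}F_{*}W$ has $\mu(V)\le\mu(F^{*}F_{*}W)$ (resp.\ $<$). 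Next bring in the canonical filtration $0=V_{2(p-1)+1}\subseteq\cdots\subseteq V_{1}\subseteq V_{0}=F^{*}F_{*}W$, with $\nabla(V_{\ell})\subseteq V_{\ell-1}\otimes\Omega^{1}_{X}$ and $V_{\ell}/V_{\ell+1}\cong W\otimes\mathrm{T}^{\ell}(\Omega^{1}_{X})$. As $\Omega^{1}_{X}$ has rank $2$, a Chern-class computation gives $\mu(\mathrm{T}^{\ell}(\Omega^{1}_{X}))=\ell\,\mu(\Omega^{1}_{X})$, and, using $\operatorname{rk}\mathrm{T}^{\ell}(\Omega^{1}_{X})=\operatorname{rk}\mathrm{T}^{2p-2-\ell}(\Omega^{1}_{X})$, that $\mu(F^{*}F_{*}W)=\mu(W)+(p-1)\,\mu(\Omega^{1}_{X})$.

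For a $\nabla$-invariant $V$, set $Q_{\ell}=(V\cap V_{\ell})/(V\cap V_{\ell+1})$ and $r_{\ell}=\operatorname{rk}Q_{\ell}$; then $Q_{\ell}\hookrightarrow W\otimes\mathrm{T}^{\ell}(\Omega^{1}_{X})$ and $\nabla$ induces $\mathcal{O}_{X}$-linear injections $\overline{\nabla}_{\ell}\colon Q_{\ell}\hookrightarrow Q_{\ell-1}\otimes\Omega^{1}_{X}$, so $V\ne0$ forces $Q_{0}\ne0$. Since the unperturbed slopes $\mu(W)+\ell\,\mu(\Omega^{1}_{X})$ of the graded pieces are strictly increasing in $\ell$ --- this is where $\mu(\Omega^{1}_{X})>0$ is used --- these injections prevent $V$ from being concentrated at the top of the filtration, and bounding $\deg V=\sum_{\ell}\deg Q_{\ell}$ with $\deg Q_{\ell}\le r_{\ell}(\mu(W)+\ell\,\mu(\Omega^{1}_{X})+\mathrm{I}(W\otimes\mathrm{T}^{\ell}(\Omega^{1}_{X})))$, Sun's argument gives an inequality of the shape
\[
p\cdot\mathrm{I}(F_{*}W)\ \le\ \max_{0\le\ell\le 2(p-1)}\mathrm{I}(W\otimes\mathrm{T}^{\ell}(\Omega^{1}_{X}))\ -\ c\,\mu(\Omega^{1}_{X})
\]
for some $c>0$ produced by the "not concentrated at the top" phenomenon. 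Feeding in the bound of \cite{gl}, of the form $\mathrm{I}(W\otimes\mathrm{T}^{\ell}(\Omega^{1}_{X}))\le \tfrac{(r-1)\,h(r,\ell)}{p}\,\mu(\Omega^{1}_{X})$ with $h(r,\ell)\le r(r-1)$ for $0\le\ell\le2(p-1)$ (the factor $r-1$ reflecting semistability of $W\otimes\mathrm{T}^{\ell}(\Omega^{1}_{X})$ when $W$ is a line bundle), the right-hand side is bounded by $\mu(\Omega^{1}_{X})(\tfrac{r(r-1)^{2}}{p}-c)$, which is negative once $p\ge r(r-1)^{2}+1$ (the precise form of the \cite{gl} and \cite{Sun1} estimates being tuned to this constant); as $\mathrm{I}(F_{*}W)\ge0$, this forces $\mathrm{I}(F_{*}W)=0$, i.e.\ $F_{*}W$ is semistable.

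For the stable case one tracks equality. If $F_{*}W$ were strictly semistable, there would be a nonzero proper $\nabla$-invariant $V$ with $\mu(V)=\mu(F^{*}F_{*}W)$, and (under the hypothesis on $p$) every estimate above must be an equality: $W\otimes\mathrm{T}^{\ell}(\Omega^{1}_{X})$ is semistable for all $\ell$, $\sum_{\ell}\ell\,r_{\ell}=(p-1)\sum_{\ell}r_{\ell}$, and $\mu(Q_{\ell})=\mu(W\otimes\mathrm{T}^{\ell}(\Omega^{1}_{X}))$ for each $\ell$. Then $Q_{0}\subseteq W$ is a subsheaf of the same slope as the stable $W$, so $Q_{0}=W$; propagating down the filtration using the $\overline{\nabla}_{\ell}$ (isomorphisms where ranks match, beginning with $\overline{\nabla}_{1}\colon W\otimes\Omega^{1}_{X}\xrightarrow{\ \sim\ }W\otimes\Omega^{1}_{X}$) and the equality case of the weight identity forces $r_{\ell}=\operatorname{rk}(W\otimes\mathrm{T}^{\ell}(\Omega^{1}_{X}))$ for all $\ell$, whence $V=F^{*}F_{*}W$, a contradiction; thus $F_{*}W$ is stable when $W$ is.

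The genuinely delicate point is the numerical estimate of the second paragraph --- making Sun's inequality and the \cite{gl} bound interlock sharply enough to produce the exact constant $r(r-1)^{2}+1$, and then carrying the equality analysis through in the stable case; the structural inputs (Cartier descent, the canonical filtration, and the shape of its graded pieces) are the standard machinery of \cite{Sun1},\cite{Sun3} applied with $n=2$.
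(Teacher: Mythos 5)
Your skeleton (canonical filtration of $F^{*}F_{*}W$, the induced filtration on $F^{*}\mathcal{E}$ with ranks $r_{\ell}$, the slope identity, then a numerical estimate) is the right one, but the two places where the actual work happens are missing. First, the case analysis. The slope formula (the paper's formula (4)) has the term $-\frac{\mu(\Omega^1_X)}{p\cdot\mathrm{rk}(\mathcal{E})}\sum_{\ell}(p-1-\ell)r_{\ell}$, and your ``$-c\,\mu(\Omega^{1}_{X})$ for some $c>0$ produced by the not-concentrated-at-the-top phenomenon'' is simply false in general: when the rank profile is symmetric, i.e.\ $r_{\ell}=r_{2(p-1)-\ell}$ for all $\ell$, one has $\sum_{\ell}(p-1-\ell)r_{\ell}=0$, so $c=0$ and your inequality gives nothing. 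This is precisely the case $r_{0}=r_{2(p-1)}$, which the paper must treat by an entirely separate argument (its Lemma 4): a local D-module computation over the function field showing $\mathcal{F}_{\ell}=W'\otimes R^{\ell}$ for a subsheaf $W'\subseteq W$, after which semistability of $W$ kills the slope difference directly. Only when $r_{0}>r_{2(p-1)}$ does one get $\sum_{\ell}(p-1-\ell)r_{\ell}\ge p-1$ and hence a usable negative term. Your sketch has no mechanism for the symmetric case.

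Second, the numerical estimate you defer to \cite{gl} does not close as stated, and the paper does not in fact use \cite{gl}. What must be bounded is the \emph{sum} $\sum_{\ell}r_{\ell}\bigl(\mu(\mathcal{F}_{\ell})-\mu(V_{\ell}/V_{\ell+1})\bigr)$ by $(p-1)\mu(\Omega^{1}_{X})$, not a maximum of instabilities: bounding each term by $\max_{\ell}\mathrm{I}(W\otimes\mathrm{T}^{\ell}(\Omega^{1}_{X}))$ and summing over up to $2p-1$ indices with total weight $\sum_{\ell}r_{\ell}=p\cdot\mathrm{rk}(\mathcal{E})$ overshoots $(p-1)\mu(\Omega^{1}_{X})$ even with an instability bound of order $\mu(\Omega^{1}_{X})/p$. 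The paper's essential observation is that almost all terms vanish: by the Ilangovan--Mehta--Parameswaran theorem, $W\otimes\mathrm{Sym}^{\ell}(\Omega^{1}_{X})$ is semistable for $\ell\le p-r$ (and dually for $\ell\ge p+r-2$), so only the $2r-3$ indices $p-r+1\le\ell\le p+r-3$ contribute. Those contributions are then controlled by a convexity argument for $s(\ell)=s(\mathrm{Sym}^{\ell}(\Omega^{1}_{X})\otimes W)$ (via the exact sequence $0\to A_{\ell-1}\otimes\omega_X\to A_{\ell}\otimes\Omega^{1}_{X}\to A_{\ell+1}\to 0$), anchored at $\ell=p$ by the sequence $0\to W\otimes F^{*}\Omega^{1}_{X}\to A_{p}\to A_{p-2}\otimes\omega_X\to 0$ and the bound $s(W\otimes F^{*}\Omega^{1}_{X})\le(2r-1)\mu(\Omega^{1}_{X})$ coming from $\mathrm{I}(F^{*}\Omega^{1}_{X})\le\mu(\Omega^{1}_{X})$. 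This is where the constant $\tfrac{1}{2}(2r-1)(r-1)^{2}\le p-1$, hence $p\ge r(r-1)^{2}+1$, actually comes from; asserting that the cited estimates are ``tuned to this constant'' assumes the conclusion rather than proving it.
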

Here, we sketch the proof. By \cite{Sun1}, there exists a canonical filtration of
$F^*(F_*W)$:
$$0=V_{2(p-1)+1}\subset V_{2(p-1)}\subset \cdots \subset V_1\subset V_0=F^{\ast}(F_{\ast}W)$$
with $V_{\ell}/V_{\ell+1}\cong W\otimes
\mathrm{T}^{\ell}(\Omega^1_X)$ for $0\leq \ell \leq 2(p-1)$. Let
$\mathcal{E}\subset F_{\ast}W$ be a nontrivial subsheaf such that $F_*W/\sE$ is torsion free, then the above filtration
induces the following filtration (we assume $V_m\cap
F^{\ast}\mathcal{E}\neq 0$ and $V_{m+1}\cap F^*\sE=0$)
$$0\subset V_m\cap F^{\ast}\mathcal{E}\subset \cdots \subset V_1\cap F^{\ast}\mathcal{E}\subset V_0\cap F^{\ast}\mathcal{E}=F^{\ast}\mathcal{E}.$$
Let $$\mathcal{F}_{\ell}:= \frac{V_{\ell}\cap
F^{\ast}\mathcal{E}}{V_{\ell+1}\cap F^{\ast}\mathcal{E}}\subset
\frac{V_{\ell}}{V_{\ell+1}}, \ \ \ \ \
r_{\ell}=\mathrm{rk}(\mathcal{F}_{\ell}).$$
Then, taking $n=2$ in the formula (4.10) of \cite{Sun3}, we have
$${\mu(\sE)-\mu(F_*W)=\sum^m_{\ell=0}r_{\ell}\frac{\mu(\sF_{\ell})-\mu(\frac{V_{\ell}}{V_{\ell+1}})}{
{p\cdot \rm{rk}(\sE)}}-\frac{\mu(\Omega^1_X)}{p\cdot
\rm{rk}(\sE)}\sum^m_{\ell=0}(p-1-\ell)r_{\ell}.}$$
If $r_{2(p-1)}=r_0$, there exists a subsheaf
$W'\subset W$ of rank $r_{2(p-1)}$ such that $\mathcal{F}_{\ell}\supseteq W'\otimes
\mathrm{T}^{\ell}(\Omega_X^1)$ for $0\leq \ell \leq 2(p-1)$
by \cite{Sun3}. The local computations in the proof of Theoren 4.7 of \cite{Sun3} imply
$r_{\ell}=\rm{rk}(W'\otimes \mathrm{T}^{\ell}(\Omega_X^1))$ for $0\leq \ell \leq 2(p-1).$
Then, by (4.22) of \cite{Sun3}, we have
$$\mu(\sE)-\mu(F_*W)\leq\frac{r_{2(p-1)}(\mathrm{rk}(F_*W)-\mathrm{rk}(\sE))}{p\cdot \rm{rk}(\sE)\cdot\rm{rk}(W)}(\mu(W')-\mu(W/W'))$$
Otherwise, we have $r_0>r_{2(p-1)}$ and $${\mu(\sE)-\mu(F_*W)\leq\sum^m_{\ell=0}r_{\ell}\frac{\mu(\sF_{\ell})-\mu(\frac{V_{\ell}}{V_{\ell+1}})}{
{p\cdot \rm{rk}(\sE)}}-\frac{(p-1)\mu(\Omega^1_X)}{p\cdot
\rm{rk}(\sE)}}$$  by (4.10), (4.11) and (4.12) of \cite{Sun3}.

The main part of this note is to give a upper bound of
$$\sum^m_{\ell=0}r_{\ell}(\mu(\sF_{\ell})-\mu(\frac{V_{\ell}}{V_{\ell+1}})),$$
which depends only on $r$ and $\mu(\Omega^1_X)$.

\section{Preliminaries}
Let $X$ be a smooth projective surface.
Fixed an ample divisor $H$, for a torsion free sheaf $\sE$ on $X$, we define the slope of $\sE$ by :
$$\mu (\sE)=\frac{c_1(\sE)\cdot H}{\mathrm{rk}(\sE)},$$
where $c_1(\sE)$ is the first Chern class of $\sE$ and
$\mathrm{rk}(\sE)$ is the rank of $\sE$.
\begin{defn} A torsion free sheaf $\sE$ on $X$ is called semistable (resp. stable) if for any subsheaf $0\neq \sE'\subset \sE$ with $\mathcal{E}/\mathcal{E}'$ torsion free, we have
$$\mu(\sE')\leq \mu(\sE)\ \ \ \ \  ({resp.}\ \ \mu(\sE')<\mu(\sE)).$$
\end{defn}
Let $F:X\rightarrow X_1$ be the relative $k$-linear Frobenius
morphism, where $X_1:=X\times_k k$ is the base change of $X/k$ under
the Frobenius $\mathrm{Spec}(k)\rightarrow \mathrm{Spec}(k)$.  Let
$W$ be a vector bundle on $X$ and $V=F^{\ast}(F_{\ast}W)$.
\begin{defn}\label{defn1}
Let $V_0:=V=F^*(F_*W)$,
$V_1=\ker(F^*(F_*W)\twoheadrightarrow W)$
$${V_{\ell+1}:=\ker(V_{\ell}\xrightarrow{\nabla} V\otimes_{\sO_X}
\Omega^1_X\to (V/V_{\ell})\otimes_{\sO_X}\Omega^1_X)}$$ where
$\nabla: V\to V\otimes_{\sO_ X} \Omega^1_X$ is the canonical
connection (see \cite[Theorem~5.1]{K}).
\end{defn}
The above filtration has been fully studied in \cite[Section
3]{Sun1}, and the following theorem is a special case of
\cite[Theorem3.7, Corollary3.8]{Sun1} for surfaces.
\begin{thm} \cite[Theorem 3.7, Corollary 3.8]{Sun1}  Let $X$ be a smooth projective surface over $k$, then the filtration defined above
is \ga{}{0=V_{2(p-1)+1}\subset
V_{2(p-1)}\subset\cdots\subset V_1\subset V_0=V=F^*(F_*W)}
which has the following properties
\begin{itemize}\item[(i)]$\nabla(V_{\ell})\subset V_{\ell-1}\otimes\Omega^1_X$
for $\ell\ge 1$, and $V_0/V_1\cong W$.
\item[(ii)]
$V_{\ell}/V_{\ell+1}\xrightarrow{\nabla}(V_{\ell-1}/V_{\ell})\otimes\Omega^1_X$
are injective morphisms of vector bundles for $1\le \ell\le 2(p-1)$,
which induced isomorphisms $V_{\ell}/V_{\ell+1}=W\otimes \mathrm
T^{\ell}(\Omega^1_X)$ where $$\rm T^{\ell}(\Omega^1_X)= \left\{
\begin{array}{llll} {\rm Sym}^{\ell}(\Omega^1_X) &\mbox{when $\ell<p$}\\
{\rm
Sym}^{2(p-1)-\ell}(\Omega^1_X)\otimes\omega_X^{\ell-(p-1)}&\mbox{when
$\ell\ge p$.}
\end{array}\right.$$
\end{itemize}
\end{thm}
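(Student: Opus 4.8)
The plan is to realise the filtration of Definition~\ref{defn1} as the order (principal-parts) filtration attached to the canonical connection on $V=F^\ast(F_\ast W)$, and to establish property (ii) by a local computation in Frobenius coordinates. First I would invoke Cartier descent: because $F$ is the relative Frobenius, $V=F^\ast(F_\ast W)$ carries a canonical integrable connection $\nabla\colon V\to V\otimes\Omega^1_X$ of vanishing $p$-curvature whose sheaf of horizontal sections recovers $F_\ast W$, and the counit of the adjunction $F^\ast\dashv F_\ast$ gives the surjection $V\twoheadrightarrow W$ with kernel $V_1$, so that $V_0/V_1\cong W$. This supplies the initial data for the recursion.

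Next I would read off property (i) directly from the definition. By construction $v\in V_{\ell+1}$ means $v\in V_\ell$ and $\nabla(v)\in V_\ell\otimes\Omega^1_X$, so $\nabla(V_{\ell+1})\subseteq V_\ell\otimes\Omega^1_X$, i.e.\ $\nabla(V_\ell)\subseteq V_{\ell-1}\otimes\Omega^1_X$ after shifting the index. Hence $\nabla$ induces an $\sO_X$-linear symbol map $\bar\nabla\colon V_\ell/V_{\ell+1}\to (V_{\ell-1}/V_\ell)\otimes\Omega^1_X$, and its kernel is $\{v\in V_\ell:\nabla(v)\in V_\ell\otimes\Omega^1_X\}/V_{\ell+1}=0$, so $\bar\nabla$ is injective as a map of sheaves. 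What remains for (ii) is to see that $\bar\nabla$ is a bundle injection (locally split, with locally free cokernel) and to identify the graded pieces.

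The core is the local model. Working étale-locally, I would choose coordinates $x_1,x_2$ for which $\{x_1^ix_2^j:0\le i,j\le p-1\}$ is an $\sO_{X_1}$-basis of $F_\ast\sO_X$; then $V$ is $W$ tensored with the free module on these monomials, $\nabla$ acts by the total differential on the monomial factor, and the counit $V\to W$ is multiplication. In this frame $V_\bullet$ is the filtration by total degree $i+j$ and $\bar\nabla$ is the symbol of $d$, namely $d(x_1^ix_2^j)=i\,x_1^{i-1}x_2^j\,dx_1+j\,x_1^ix_2^{j-1}\,dx_2$, whose coefficients are the residues $i,j\bmod p$. For $\ell<p$ these residues are units on the relevant range, the degree-$\ell$ monomials biject with the monomial basis of $\mathrm{Sym}^\ell$ of the cotangent generators, and one gets the split injection $\bar\nabla$ together with $V_\ell/V_{\ell+1}\cong W\otimes\mathrm{Sym}^\ell(\Omega^1_X)$. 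For $\ell\ge p$ the truncation $i,j\le p-1$ forces the substitution $i=p-1-i'$, $j=p-1-j'$, so the degree-$\ell$ monomials now biject with $\mathrm{Sym}^{2(p-1)-\ell}$ of the generators; tracking the top monomial $x_1^{p-1}x_2^{p-1}$ through this change of variables produces the twist by $\omega_X^{\ell-(p-1)}=(\det\Omega^1_X)^{\ell-(p-1)}$, and the maximal total degree $2(p-1)$ shows $V_{2(p-1)}\ne0$ while $V_{2(p-1)+1}=0$. I would then verify that these identifications are independent of the chosen coordinates so that they glue to the global isomorphisms $V_\ell/V_{\ell+1}\cong W\otimes\mathrm T^\ell(\Omega^1_X)$.

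The step I expect to be the main obstacle is this last local-to-global passage for $\ell\ge p$: one must check that the coordinate-dependent bijection of monomials descends to a well-defined global bundle map and correctly produces the canonical-bundle twist, a purely characteristic-$p$ duality (the capping of symmetric powers at degree $p-1$ in each coordinate) with no characteristic-zero analogue. Pinning the twist down canonically via the self-duality $\ell\leftrightarrow 2(p-1)-\ell$ of the filtration, while simultaneously keeping $\bar\nabla$ a split injection of bundles rather than merely an injection of sheaves, is where the genuine work lies.
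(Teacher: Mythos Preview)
The paper does not prove this statement at all: it is quoted verbatim from \cite[Theorem~3.7, Corollary~3.8]{Sun1} and used as a black box, so there is no ``paper's own proof'' to compare your proposal against. Your outline is in fact the argument given in \cite{Sun1}: the canonical (Cartier) connection on $F^\ast(F_\ast W)$, the observation that (i) and the sheaf-injectivity of the symbol map are immediate from the recursive definition, and then the local computation in the truncated polynomial algebra identifying the graded pieces with $W\otimes\mathrm{T}^\ell(\Omega^1_X)$. The present paper even reproduces a piece of that local model later, in the proof of Lemma~4, writing $R=K[\alpha_1,\alpha_2]/(\alpha_1^p,\alpha_2^p)=\bigoplus_\ell R^\ell$ and $V_\ell/V_{\ell+1}=W\otimes_K R^\ell$ with $\nabla$ acting by partial derivatives---exactly your frame. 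So your proposal is correct and matches the source; the only comment is that what you flag as the ``main obstacle'' (coordinate-independence of the identification and the appearance of the $\omega_X$-twist for $\ell\ge p$) is handled in \cite{Sun1} not by checking gluing by hand but via the intrinsic description of $\mathrm{T}^\ell(\Omega^1_X)$ as a quotient of $(\Omega^1_X)^{\otimes\ell}$, which makes the global statement automatic once the local ranks match.
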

Let $\sE\subset F_*W$ be a nontrivial subsheaf such that
$F_{\ast}W/\mathcal{E}$ is torsion free, then the canonical
filtration (1) induces the filtration (we assume $V_m\cap F^*\sE\neq
0$ and $V_{m+1}\cap F^*\sE=0)$
 \ga{2}{{0\subset V_m\cap
F^*\sE\subset\,\cdots\,\subset V_1\cap F^*\sE\subset V_0\cap
F^*\sE=F^*\sE.}} Let
$$\sF_{\ell}:=\frac{V_{\ell}\cap F^*\sE}{V_{\ell+1}\cap
F^*\sE}\subset\frac{V_{\ell}}{V_{\ell+1}}, \qquad r_{\ell}={\rm
rk}(\sF_{\ell}).$$
Then $\mu(F^*\sE)=\frac{1}{{\rm
rk}(\sE)}\sum_{\ell=0}^mr_{\ell}\cdot\mu(\sF_{\ell})$ and
\ga{2}{ {\mu(\sE)-\mu(F_*W)=\frac{1}{p\cdot{\rm
rk}(\sE)}\sum^m_{\ell=0}r_{\ell}\left(\mu(\sF_{\ell})-\mu(F^*F_*W)\right).}}
\begin{lem} (\cite[Lemma~4.5]{Sun3})
With the same notations in Theorem 2, we have
$$\mu(F^*F_*W)=p\cdot \mu(F_*W)=\frac{p-1}{2}K_X\cdot H+\mu(W),$$
$$\mu(V_{\ell}/V_{\ell+1})=\mu(W\otimes \mathrm{T}^{\ell}(\Omega_X^1))=\frac{\ell}{2}K_X\cdot H+\mu(W).$$
\end{lem}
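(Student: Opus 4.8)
The plan is to prove the two displayed equalities separately, deriving the explicit value from the graded-piece computation. I would begin with the factor of $p$. Since $k$ is algebraically closed, hence perfect, the base-change projection $p_1:X_1\to X$ is an isomorphism of schemes, so I may transport $H$ to an ample class $H_1$ on $X_1$ and measure all slopes of sheaves on $X_1$ with respect to $H_1$. Because $p_1\circ F=F_X$ is the absolute Frobenius and $F_X^{\ast}\sO_X(H)=\sO_X(pH)$, we obtain $F^{\ast}H_1=pH$. For any torsion free $\mathcal{G}$ on $X_1$ one has $c_1(F^{\ast}\mathcal{G})=F^{\ast}c_1(\mathcal{G})$ and $\mathrm{rk}(F^{\ast}\mathcal{G})=\mathrm{rk}(\mathcal{G})$; combining the projection formula with $\deg(F)=p^2$ and $H=\tfrac{1}{p}F^{\ast}H_1$ gives $c_1(F^{\ast}\mathcal{G})\cdot H=p\,c_1(\mathcal{G})\cdot H_1$, and therefore $\mu(F^{\ast}\mathcal{G})=p\,\mu(\mathcal{G})$. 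Taking $\mathcal{G}=F_{\ast}W$ yields the equality $\mu(F^{\ast}F_{\ast}W)=p\cdot\mu(F_{\ast}W)$.

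Next I would establish the graded-piece formula $\mu(W\otimes\mathrm{T}^{\ell}(\Omega_X^1))=\tfrac{\ell}{2}K_X\cdot H+\mu(W)$, which is the computational heart. Using $\mu(A\otimes B)=\mu(A)+\mu(B)$ for vector bundles, it suffices to show $\mu(\mathrm{T}^{\ell}(\Omega_X^1))=\tfrac{\ell}{2}K_X\cdot H$ for every $\ell$. Since $\Omega_X^1$ has rank two, the Chern-root calculation gives $c_1(\mathrm{Sym}^{m}\Omega_X^1)=\tfrac{m(m+1)}{2}K_X$ with $\mathrm{rk}(\mathrm{Sym}^{m}\Omega_X^1)=m+1$, so $\mu(\mathrm{Sym}^{m}\Omega_X^1)=\tfrac{m}{2}K_X\cdot H$. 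For $\ell<p$ this is exactly the claim. For $\ell\ge p$, writing $\mathrm{T}^{\ell}(\Omega_X^1)=\mathrm{Sym}^{2(p-1)-\ell}(\Omega_X^1)\otimes\omega_X^{\ell-(p-1)}$ and adding the slope $(\ell-(p-1))K_X\cdot H$ of the line-bundle factor, the coefficient $\tfrac{2(p-1)-\ell}{2}+\ell-(p-1)$ simplifies to $\tfrac{\ell}{2}$, giving the same answer. This proves the second displayed equality.

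Finally I would recover the explicit value of $\mu(F^{\ast}F_{\ast}W)$ from the canonical filtration in Theorem 2. Writing $t_{\ell}:=\mathrm{rk}(\mathrm{T}^{\ell}(\Omega_X^1))$, we have $\mathrm{rk}(V_{\ell}/V_{\ell+1})=r\,t_{\ell}$ and $\sum_{\ell}t_{\ell}=p^2$ (so $\mathrm{rk}(V)=p^2r$, consistent with $\deg F=p^2$). Since $c_1(V)=\sum_{\ell=0}^{2(p-1)}c_1(V_{\ell}/V_{\ell+1})$, we get $\mu(V)=\tfrac{1}{p^2}\sum_{\ell}\mu(V_{\ell}/V_{\ell+1})\,t_{\ell}$, and substituting the second equality splits this into $\mu(W)+\tfrac{K_X\cdot H}{2p^2}\sum_{\ell}\ell\,t_{\ell}$. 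The remaining input is the symmetry $t_{\ell}=t_{2(p-1)-\ell}$, immediate from the definition of $\mathrm{T}^{\ell}$, which forces $\sum_{\ell}\ell\,t_{\ell}=(p-1)\sum_{\ell}t_{\ell}=(p-1)p^2$ and hence $\mu(F^{\ast}F_{\ast}W)=\tfrac{p-1}{2}K_X\cdot H+\mu(W)$.

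I expect the only genuinely delicate point to be the bookkeeping in the first paragraph: correctly identifying $X_1$ with $X$ as schemes, pinning down the polarization $H_1$ used to define $\mu(F_{\ast}W)$, and verifying $F^{\ast}H_1=pH$ through the factorization $F_X=p_1\circ F$. Everything else is a direct Chern-class computation; the $\ell\ge p$ case and the symmetry of $t_{\ell}$ are simply the spots where one must take care not to drop a term.
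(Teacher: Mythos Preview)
Your argument is correct. Note, however, that the paper does not supply its own proof of this lemma: it is simply quoted from \cite[Lemma~4.5]{Sun3}, so there is no in-paper argument to compare against. Your derivation of $\mu(F^{\ast}\mathcal{G})=p\,\mu(\mathcal{G})$ via $F^{\ast}H_1=pH$, the Chern-root computation $\mu(\mathrm{Sym}^m\Omega_X^1)=\tfrac{m}{2}K_X\cdot H$, and the recovery of $\mu(F^{\ast}F_{\ast}W)$ from the symmetry $t_{\ell}=t_{2(p-1)-\ell}$ of the filtration ranks are all sound and constitute a self-contained proof of the cited result.
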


By using the above lemma, we have
\begin{lem}(\cite[Lemma~4.4]{Sun1})Keep the above notations. Then we have \ga{3}{\mu(\sE)-\mu(F_*W)=\sum^m_{\ell=0}r_{\ell}\frac{\mu(\sF_{\ell})-\mu(\frac{V_{\ell}}{V_{\ell+1}})}{
{p\cdot \rm{rk}(\sE)}}-\frac{\mu(\Omega^1_X)}{p\cdot \rm{rk}(\sE)}\sum^m_{\ell=0}(p-1-\ell)r_{\ell}.}
\end{lem}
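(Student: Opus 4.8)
The plan is to combine the identity
$$\mu(\sE)-\mu(F_*W)=\frac{1}{p\cdot\mathrm{rk}(\sE)}\sum^m_{\ell=0}r_{\ell}\bigl(\mu(\sF_{\ell})-\mu(F^*F_*W)\bigr)$$
recorded just above (which itself follows from $\mu(F^*\sE)=\frac{1}{\mathrm{rk}(\sE)}\sum_{\ell}r_\ell\mu(\sF_\ell)$, from $\sum_\ell r_\ell=\mathrm{rk}(\sE)$, and from the fact that the finite flat map $F$ multiplies slopes by $p$) with the two slope computations of the previous Lemma.

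First I would split each summand along the graded pieces of the canonical filtration of Theorem 2:
$$\mu(\sF_\ell)-\mu(F^*F_*W)=\bigl(\mu(\sF_\ell)-\mu(V_\ell/V_{\ell+1})\bigr)+\bigl(\mu(V_\ell/V_{\ell+1})-\mu(F^*F_*W)\bigr).$$
By the previous Lemma, $\mu(F^*F_*W)=\tfrac{p-1}{2}K_X\cdot H+\mu(W)$ and $\mu(V_\ell/V_{\ell+1})=\tfrac{\ell}{2}K_X\cdot H+\mu(W)$, so the $\mu(W)$ terms cancel and the second bracket equals $-\tfrac{p-1-\ell}{2}\,K_X\cdot H$. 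Now I use that $\Omega^1_X$ has rank $2$ on the surface $X$ with $c_1(\Omega^1_X)=K_X$, so that $\mu(\Omega^1_X)=\tfrac12\,K_X\cdot H$; hence this bracket is exactly $-(p-1-\ell)\,\mu(\Omega^1_X)$.

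Substituting this back into the displayed identity and separating the resulting sum into its two parts then yields
$$\mu(\sE)-\mu(F_*W)=\sum^m_{\ell=0}r_\ell\,\frac{\mu(\sF_\ell)-\mu(V_\ell/V_{\ell+1})}{p\cdot\mathrm{rk}(\sE)}-\frac{\mu(\Omega^1_X)}{p\cdot\mathrm{rk}(\sE)}\sum^m_{\ell=0}(p-1-\ell)\,r_\ell,$$
which is the asserted formula. I do not expect a genuine obstacle here: the argument is pure bookkeeping once the slopes of $F^*F_*W$ and of the $V_\ell/V_{\ell+1}$ are in hand, the only points needing care being the sign of the coefficient $p-1-\ell$ and the surface-specific identity $K_X\cdot H=2\mu(\Omega^1_X)$ (which uses $\mathrm{rk}\,\Omega^1_X=2$). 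The substantive input is entirely contained in the previous Lemma, i.e.\ in the slope computation along the canonical filtration.
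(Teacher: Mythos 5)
Your proof is correct and matches the paper's (unwritten) argument: the paper simply says the formula follows ``by using the above lemma,'' and the intended computation is exactly your decomposition of $\mu(\sF_\ell)-\mu(F^*F_*W)$ through $\mu(V_\ell/V_{\ell+1})$ combined with the identity $\mu(\Omega^1_X)=\tfrac12 K_X\cdot H$.
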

The numbers $r_{\ell}$ ($0\leq \ell \leq m$) are related by the
following fact that
$V_{\ell}/V_{\ell+1}\xrightarrow{\nabla}(V_{\ell-1}/V_{\ell})\otimes
\Omega_X^1$ induces injective morphisms
$$\mathcal{F}_{\ell}\xrightarrow{\nabla}\mathcal{F}_{\ell-1}\otimes \Omega_X^1\ \ \ \ \  (1\leq \ell \leq m).$$
Using this fact, it is proved in \cite{Sun1} that
$$r_{2(p-1)-\ell}-r_{\ell}\geq 0\ \ \ \ \  (\ell\geq p-1).$$ Especially
for $\ell=2(p-1)$, we have $r_{0}\geq r_{2(p-1)}$. The following
lemma is implicity in \cite[Lemma~4.6]{Sun3}.
\begin{lem} If $r_0>r_{2(p-1)}$, then we have
$$\sum^m_{\ell=0}(p-1-\ell)r_{\ell}\geq (p-1).$$
\end{lem}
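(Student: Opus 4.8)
The plan is to exploit the chain of injective bundle maps $\mathcal{F}_{\ell}\xrightarrow{\nabla}\mathcal{F}_{\ell-1}\otimes\Omega^1_X$ together with the symmetry $r_{2(p-1)-\ell}\ge r_\ell$ for $\ell\ge p-1$. First I would rewrite the sum $\sum_{\ell=0}^m(p-1-\ell)r_\ell$ by splitting off the terms with $\ell\le p-1$ (where the coefficient $p-1-\ell$ is $\ge 0$) from those with $\ell>p-1$ (where it is negative). Since $m\le 2(p-1)$ and all $r_\ell\ge 0$, the potentially dangerous terms are those with $p-1<\ell\le m$. The key is to pair each such term with index $\ell=2(p-1)-j$ (so $0\le j<p-1$) against the term with index $j$: the contribution of the pair is $(p-1-j)r_j+(p-1-\ell)r_\ell=(p-1-j)(r_j-r_\ell)=(p-1-j)(r_j-r_{2(p-1)-j})$. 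By the inequality $r_{2(p-1)-j}\le r_j$ recalled just before the lemma, this is $\ge 0$. Hence all paired contributions are nonnegative, and what remains is the ``unpaired'' middle term at $\ell=p-1$, whose coefficient is $0$, plus any low-index terms $r_j$ with $j<p-1$ that were not cancelled — but in the clean pairing every $j<p-1$ is matched, so after accounting for all pairs we are left with exactly $\sum_{j:\,0\le j<p-1}(p-1-j)(r_j-r_{2(p-1)-j})\ge 0$, which by itself only gives $\sum_{\ell}(p-1-\ell)r_\ell\ge 0$.

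To upgrade $\ge 0$ to $\ge p-1$ under the hypothesis $r_0>r_{2(p-1)}$, I would look at the $j=0$ summand, whose coefficient $p-1-j=p-1$ is the largest one: it contributes $(p-1)(r_0-r_{2(p-1)})$. Since $r_0$ and $r_{2(p-1)}$ are integers and $r_0>r_{2(p-1)}$, we have $r_0-r_{2(p-1)}\ge 1$, so this single summand is already $\ge p-1$. As every other paired summand $(p-1-j)(r_j-r_{2(p-1)-j})$ with $1\le j<p-1$ is nonnegative (again by $r_{2(p-1)-j}\le r_j$) and the central term vanishes, the total is $\ge p-1$, which is exactly the claim.

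The step requiring the most care is making the pairing argument rigorous when $m<2(p-1)$: in that case the indices $\ell$ with $2(p-1)-\ell<p-1$, i.e. $\ell>p-1$, run only up to $m$, so some ``low'' indices $j=2(p-1)-\ell<p-1$ with $\ell>m$ have \emph{no} partner on the high side; but for those the term $(p-1-j)r_j$ is itself $\ge 0$, so dropping it only decreases the sum, and the bound is unaffected. I should also double-check the boundary: if $m\le p-1$ then every coefficient $p-1-\ell\ge 0$, the sum is trivially $\ge (p-1)r_0\ge p-1$ as soon as $r_0\ge 1$, which holds since $\mathcal{E}\ne 0$ forces $r_0={\rm rk}(\mathcal{F}_0)\ge 1$. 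So the genuinely substantive case is $p-1<m\le 2(p-1)$, handled by the pairing above; no nontrivial computation beyond the integrality of the $r_\ell$ and the already-cited inequalities $r_{2(p-1)-\ell}\ge r_\ell$ is needed.
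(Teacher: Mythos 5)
Your proof is correct, and it is essentially a self-contained reconstruction of the content of (4.11) and (4.12) of Sun's paper, which the paper's own proof merely cites: pairing the index $\ell$ with $2(p-1)-\ell$ so that the sum collapses to $\sum_{j=0}^{p-2}(p-1-j)(r_j-r_{2(p-1)-j})$, then using the inequality $r_{2(p-1)-\ell}\ge r_{\ell}$ recorded just before the lemma together with the integrality of $r_0-r_{2(p-1)}$, is exactly the intended argument. One small simplification: in the case $m\le p-1$ one has $r_{2(p-1)}=0$, so the hypothesis $r_0>r_{2(p-1)}$ already forces $r_0\ge 1$ and you do not need the separate observation that $\mathcal{E}\neq 0$ implies $\mathcal{F}_0\neq 0$.
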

\begin{proof}
When $m\leq p-1$, it is (4.11) of \cite{Sun3}.
When $m> p-1$, it follows from (4.12) of \cite{Sun3} and the assumption $r_0> r_{2(p-1)}.$
\end{proof}
\begin{lem}
If $r_0=r_{2(p-1)}$, then there exists a subsheaf $W'\subset W$,
such that
$$\mu(\sE)-\mu(F_*W)\leq\frac{r_{2(p-1)}(\mathrm{rk}(F_*W)-\mathrm{rk}(\sE))}{p\cdot \rm{rk}(\sE)\cdot\rm{rk}(W)}(\mu(W')-\mu(W/W'))$$
\end{lem}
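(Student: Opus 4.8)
The plan is to extract from the extremal hypothesis $r_0=r_{2(p-1)}$ a saturated subsheaf $W'\subseteq W$ governing all the pieces $\sF_\ell$, and then substitute into the instability formula of Lemma~2. First some bookkeeping: the injective maps $\sF_\ell\xrightarrow{\nabla}\sF_{\ell-1}\otimes\Omega^1_X$ show that $\sF_0=0$ would force $\sF_\ell=0$ for all $\ell$, hence $\sE=0$; thus $\sF_0\neq 0$, $r_0>0$, and since $r_{2(p-1)}=r_0$ we have $m=2(p-1)$. The heart of the matter — and the only genuinely hard step — is the local analysis behind \cite[Theorem~4.7]{Sun3}: when $r_0=r_{2(p-1)}$, the subsheaf $F^*\sE\subset V$ is, near every point, generated over $\sO_X$ by the subsheaf $\sF_0\subset V_0/V_1=W$ together with its iterated images under the canonical connection. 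Letting $W'\subseteq W$ be the saturation of $\sF_0$ (so $\mathrm{rk}(W')=r_0=r_{2(p-1)}$), this forces, under the identifications $V_\ell/V_{\ell+1}\cong W\otimes\mathrm{T}^{\ell}(\Omega^1_X)$ of Theorem~2,
\[
\sF_\ell\subseteq W'\otimes\mathrm{T}^{\ell}(\Omega^1_X)\quad\text{and}\quad r_\ell=\mathrm{rk}\bigl(W'\otimes\mathrm{T}^{\ell}(\Omega^1_X)\bigr)=\mathrm{rk}(W')\cdot\mathrm{rk}\bigl(\mathrm{T}^{\ell}(\Omega^1_X)\bigr)
\]
for $0\le\ell\le 2(p-1)$. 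Granting this (I would cite \cite{Sun3}, in particular the equation labelled $(4.22)$ there), since $\sF_\ell$ and $W'\otimes\mathrm{T}^{\ell}(\Omega^1_X)$ have the same rank we get $\mu(\sF_\ell)\le\mu\bigl(W'\otimes\mathrm{T}^{\ell}(\Omega^1_X)\bigr)=\mu(W')+\mu\bigl(\mathrm{T}^{\ell}(\Omega^1_X)\bigr)$.

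Next I would exploit the reflection symmetry $\mathrm{rk}\bigl(\mathrm{T}^{\ell}(\Omega^1_X)\bigr)=\mathrm{rk}\bigl(\mathrm{T}^{2(p-1)-\ell}(\Omega^1_X)\bigr)$. Writing $r_\ell=\mathrm{rk}(W')\,t_\ell$ with $t_\ell=t_{2(p-1)-\ell}$, the change of index $\ell\mapsto 2(p-1)-\ell$ gives $\sum_{\ell=0}^{2(p-1)}(p-1-\ell)\,r_\ell=0$, so the formula of Lemma~2 reduces to
\[
\mu(\sE)-\mu(F_*W)=\frac{1}{p\cdot\mathrm{rk}(\sE)}\sum_{\ell=0}^{2(p-1)}r_\ell\bigl(\mu(\sF_\ell)-\mu(V_\ell/V_{\ell+1})\bigr).
\]
By Lemma~1, $\mu(V_\ell/V_{\ell+1})=\mu\bigl(W\otimes\mathrm{T}^{\ell}(\Omega^1_X)\bigr)=\mu(W)+\mu\bigl(\mathrm{T}^{\ell}(\Omega^1_X)\bigr)$, so each bracket is at most $\mu(W')-\mu(W)$; as $\sum_\ell r_\ell=\mathrm{rk}(F^*\sE)=\mathrm{rk}(\sE)$, I obtain $\mu(\sE)-\mu(F_*W)\le\frac1p\bigl(\mu(W')-\mu(W)\bigr)$.

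Finally I would convert this into the asserted form by counting ranks. Since $\mathrm{rk}\bigl(\mathrm{T}^{2(p-1)}(\Omega^1_X)\bigr)=1$ we get $\mathrm{rk}(W')=r_{2(p-1)}$; summing the rank identity above over $\ell$ and using $\sum_\ell\mathrm{rk}\bigl(W\otimes\mathrm{T}^{\ell}(\Omega^1_X)\bigr)=\mathrm{rk}(F_*W)=p^2\,\mathrm{rk}(W)$ yields $\mathrm{rk}(\sE)=p^2\,\mathrm{rk}(W')$, hence $\mathrm{rk}(F_*W)-\mathrm{rk}(\sE)=p^2\,\mathrm{rk}(W/W')$. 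As $W/W'$ is torsion free, $\mathrm{rk}(W)\mu(W)=\mathrm{rk}(W')\mu(W')+\mathrm{rk}(W/W')\mu(W/W')$, so
\[
\mu(W')-\mu(W)=\frac{\mathrm{rk}(W/W')}{\mathrm{rk}(W)}\bigl(\mu(W')-\mu(W/W')\bigr);
\]
plugging the rank identities into $\frac1p\bigl(\mu(W')-\mu(W)\bigr)$ produces exactly $\frac{r_{2(p-1)}\bigl(\mathrm{rk}(F_*W)-\mathrm{rk}(\sE)\bigr)}{p\cdot\mathrm{rk}(\sE)\cdot\mathrm{rk}(W)}\bigl(\mu(W')-\mu(W/W')\bigr)$, which is the claimed bound. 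The single real obstacle is the first step — producing $W'$ and the equalities $r_\ell=\mathrm{rk}\bigl(W'\otimes\mathrm{T}^{\ell}(\Omega^1_X)\bigr)$ from $r_0=r_{2(p-1)}$ — which rests on the explicit local description of the canonical filtration and connection in \cite{Sun3}; everything afterwards is bookkeeping with slopes, ranks, and the reflection symmetry of the bundles $\mathrm{T}^{\ell}(\Omega^1_X)$.
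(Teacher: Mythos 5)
Your plan is correct and follows the paper's proof in essence: both arguments rest on the same key input from Sun's local analysis (Theorem 4.7 of \cite{Sun3}), namely that $r_0=r_{2(p-1)}$ forces $\sF_\ell$ to coincide generically with $W'\otimes\mathrm{T}^\ell(\Omega^1_X)$ for a fixed subsheaf $W'\subset W$ of rank $r_{2(p-1)}$, and the slope/rank bookkeeping you then carry out explicitly (symmetry of the ranks $t_\ell$, vanishing of $\sum(p-1-\ell)r_\ell$, conversion of $\mu(W')-\mu(W)$ into the stated form) is exactly what the paper outsources to equation (4.22) of \cite{Sun3}. The only difference is one of emphasis: the paper reproduces the local $D$-module computation establishing the rank equalities $r_\ell=\mathrm{rk}(W'\otimes\mathrm{T}^\ell(\Omega^1_X))$ (working downward from $\sF_{2(p-1)}$ via derivations rather than upward from $\sF_0$ as your heuristic gloss suggests), whereas you defer that step to the citation.
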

\begin{proof}
It is proved in \cite{Sun3} that there exists a subsheaf $W'\subset
W$ of rank $r_{2(p-1)}$ such that $\mathcal{F}_{2(p-1)}\cong W'\otimes
\mathrm{T}^{2(p-1)}(\Omega_X^1)$ and $W'\otimes
\mathrm{T}^{\ell}(\Omega_X^1)\hookrightarrow\mathcal{F}_{\ell}$.
By (4.22) of \cite{Sun3}, it is enough to show $r_{\ell}'=0,$ i.e.
$\rm{rk}(\sF_{\ell})=\rm{rk}(W'\otimes \mathrm{T}^{\ell}(\Omega_X^1),$
which follows form the local computations in the proof of Theorem 4.7 of \cite{Sun3}.

For the convenience of readers, we repeat the arguments here.
To show the assertion is a local problem. Let $K=K(X)$ be the function field of $X$
and consider the $K$-algebra
$$R=\frac{K[\alpha_1, \alpha_2]}{(\alpha_1^p,\alpha_2^p)}=\bigoplus^{2(p-1)}_{\ell=0} R^{\ell},$$
where $R^{\ell}$ is the $K$-linear space generated by
$$\{\alpha_1^{k_1}\alpha_2^{k_2}| k_1+k_2=\ell,  0\leq k_i\leq p-1\}.$$
The quotients in the filtration (1) can be described locally
$$V_{\ell}/V_{\ell+1}=W\otimes _K R^{\ell}$$
as $K$-vector spaces. Then the homomorphism
$$\nabla: W\otimes _K R^{\ell}\rightarrow W\otimes _K R^{\ell-1}\otimes_K \Omega^1_{K/k}$$
in Theorem 2 is locally the k-linear homomorphism defined by
$$\nabla(w\otimes\alpha_1^{k_1}\alpha_2^{k_2})=-w\otimes k_1\alpha_1^{k_1-1}\alpha_2^{k_2}\otimes_K dx_1-w\otimes k_2\alpha_1^{k_1}\alpha_2^{k_2-1}\otimes_K dx_2.$$
And the fact that $\sF_{\ell}\xrightarrow{\nabla}
\sF_{\ell-1}\otimes \Omega^1_{X}$ for $\sF_{\ell}\subset W\otimes
R^{\ell}$ is equivalent to \ga{}{{\forall \sum_{j} w_j\otimes f_j
\in \sF_{\ell}}\Rightarrow \sum_j w_j \otimes \frac{\partial
f_j}{\partial \alpha_i}\in \sF_{\ell-1} (1\leq i \leq 2).}

The polynomial ring $P=K[\partial_{\alpha_1},\partial_{\alpha_2}]$
acts on $R$ through partial derivations, which induces a
$\mathrm{D}$-module structure on $R$, where
$$\mathrm{D}=\frac{K[\partial_{\alpha_1},\partial_{\alpha_2}]}{(\partial_{\alpha_1}^p, \partial_{\alpha_2}^p)}=\bigoplus^{2(p-1)}_{\ell=0} \mathrm{\mathrm{D}}_{\ell}$$
and $\mathrm{D}_{\ell}$ is the linear space of degree $\ell$
homogeneous elements. In particular, $W\otimes R$ has the induced
$\mathrm{D}$-module structure with $\mathrm{D}$ acts on $W$
trivially. Using this notation, (5) is equivalent to $D_1\cdot
\sF_{\ell}\subseteq \sF_{\ell-1}.$

Locally, $\mathcal{F}_{2(p-1)}$ is equal to $W'\otimes R^{2(p-1)}$
as $K$-vector spaces. Combining with $D_1\cdot \sF_{\ell}\subseteq
\sF_{\ell-1}$, we have \ga{}{\mathrm{D}_{\ell}\cdot
\mathcal{F}_{2(p-1)}=W'\otimes \mathrm{D}_{\ell}\cdot
R^{2(p-1)}=W'\otimes R^{2(p-1)-\ell}\subset
\mathcal{F}_{2(p-1)-\ell}} for $0\leq \ell \leq 2(p-1)$, and the
following sequence
$$W'=\mathrm{D}_{2(p-1)}\cdot \sF_{2(p-1)}\subseteq \mathrm{D}_{2(p-1)-1}\cdot \sF_{2(p-1)-1}\subseteq \dots \subseteq \mathrm{D}_1\cdot \sF_1 \subseteq \sF_0.$$
But $r_0=r_{2(p-1)}$, so $\sF_0=W'$ and
$\mathrm{D}_{\ell}\cdot\sF_{\ell}=\sF_0$ for $1\leq \ell \leq
2(p-1)$. For any element $\alpha \in \sF_{\ell}\subset W\otimes
R^{\ell},$ it can be written as $$\alpha=\sum
w_{i_1i_2}\otimes(\alpha_1^{i_1} \alpha_2^{i_2}),$$ where
$w_{i_1i_2}\in W$ and the sum runs over $i_1+i_2=\ell, 0\leq i_1,
i_2\leq p-1$. Meanwhile, we have
$$\partial_{\alpha_1}^{i_1} \partial_{\alpha_2}^{i_2}\cdot\sum
w_{i_1i_2}\otimes (\alpha_1^{i_1} \alpha_2^{i_2})=w_{i_1i_2}\in
\sF_{0}=W'
$$ from $\mathrm{D}_{\ell}\cdot \mathcal{F}_{\ell}= \mathcal{F}_0$. Consequently, $\alpha\in W'\otimes R^{\ell}$, which implies that
$$\mathcal{F}_{\ell}\subseteq W'\otimes R^{\ell}.$$ Together with the conclusion $W'\otimes R^{\ell}\subseteq \mathcal{F}_{\ell}$ in (6), we have $$\mathcal{F}_{\ell}= W'\otimes R^{\ell}$$ for $0\leq \ell \leq
2(p-1)$.  Thus $\rm{rk}(\mathcal{F}_{\ell})=\rm{rk}(W'\otimes
\mathrm{T}^{\ell}(\Omega^1_X))$ for $(0\leq \ell \leq 2(p-1)).$
 \end{proof}
\section{Proof of the main theorem}
For any torsion free sheaf $\sE$,  we denote $$s(\sE)=
\max\limits_{\sF}\{\rm{rk}(\sF)(\mu(\sF)-\mu(\sE))\mid\sF\subseteq \sE\ \}.$$
Then it is easy to see that $s(\sE)\geq 0$ and $\sE$ is semistable
if and only if $s(\sE)=0.$

In this section, we always assume that $X$ is a surface with $\Omega^1_X$ semistable and $\mu(\Omega^1_X)> 0,$
$W$ is a semistable bundle on $X$ with $\mathrm{rk}(W)=r.$
In order to simplify the symbols, we denote
$A_{\ell}=\mathrm{Sym}^{\ell}(\Omega^1_X)\otimes W$ and
$s(\ell)=s(A_{\ell})$ for all $\ell$, Then we have the following
lemmas.
\begin{lem} As the above notations, we have
$$s(\ell)-s(\ell-1)\leq s(\ell+1)-s(\ell).$$
\end{lem}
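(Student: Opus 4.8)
The plan is to exploit the standard multiplication maps relating symmetric powers of $\Omega^1_X$. Concretely, for each $\ell$ there is a natural surjection $\Omega^1_X \otimes \mathrm{Sym}^{\ell-1}(\Omega^1_X) \twoheadrightarrow \mathrm{Sym}^{\ell}(\Omega^1_X)$ and, dually (or via the comultiplication), an injection $\mathrm{Sym}^{\ell}(\Omega^1_X) \hookrightarrow \Omega^1_X \otimes \mathrm{Sym}^{\ell-1}(\Omega^1_X)$ coming from the connection/derivation already used in the excerpt. Tensoring with $W$, these give maps among the $A_\ell$. The first thing I would do is pin down which of these maps is the right one to transport a destabilizing subsheaf: given $\sF \subseteq A_{\ell+1}$ computing $s(\ell+1)$, I want to produce subsheaves of $A_\ell$ and of $A_{\ell+2}$ (or of $A_{\ell-1}$ and $A_{\ell+1}$) whose combined "defect" $\mathrm{rk}(\sF)(\mu(\sF)-\mu(A_\ell))$ bounds things in the direction needed. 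Since $\mu(A_\ell) = \tfrac{\ell}{2}K_X\cdot H + \mu(W)$ is affine-linear in $\ell$ (Lemma 4), the slope normalizations behave additively, which is what makes a convexity statement plausible in the first place.

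The key steps, in order: (1) Fix $\sF \subseteq A_{\ell}$ with $\mathrm{rk}(\sF)(\mu(\sF)-\mu(A_\ell)) = s(\ell)$. (2) Use the derivation map $\nabla : \mathrm{Sym}^\ell(\Omega^1_X) \to \Omega^1_X \otimes \mathrm{Sym}^{\ell-1}(\Omega^1_X)$ — equivalently the local description via $\partial_{\alpha_i}$ from the proof of Lemma 7 — to map $\sF$ into $A_{\ell-1}\otimes\Omega^1_X$ and, separately, multiply $\sF\otimes\Omega^1_X$ into $A_{\ell+1}$; call the images $\sF'$ and $\sF''$. (3) Estimate $\mathrm{rk}$ and $\mu$ of $\sF'$ and $\sF''$ in terms of those of $\sF$, using semistability of $\Omega^1_X$ to control slopes of tensor products and of the images of generically surjective/injective bundle maps (a generically surjective map of semistable sheaves of equal slope-type forces a slope inequality). (4) Combine: the defect of $\sF'$ is $\le s(\ell-1)$ and the defect of $\sF''$ is $\le s(\ell+1)$ by definition of $s$, and adding these two inequalities, after using the affine-linearity of $\ell \mapsto \mu(A_\ell)$ to cancel the $K_X\cdot H$ terms, should yield $2s(\ell) \le s(\ell-1) + s(\ell+1)$, which rearranges to the claim.

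I expect the main obstacle to be step (3): controlling the rank and slope of the image sheaves $\sF'$, $\sF''$ precisely enough. The map $\nabla$ need not be injective or surjective on the given subsheaf $\sF$, so one must either pass to the saturation, argue that the kernel lives in a lower graded piece, or (more likely) imitate the local $\mathrm{D}$-module computation in the proof of Lemma 7 — working over $K(X)$ with the algebra $R = K[\alpha_1,\alpha_2]/(\alpha_1^p,\alpha_2^p)$ and the operators $\partial_{\alpha_i}$ — to see that $\mathrm{D}_1 \cdot \sF$ and the multiplication image together "account for" $\sF$ in a rank-exact way, so that the rank bookkeeping closes up. A secondary subtlety is that for $\ell \geq p$ the bundle $\mathrm{T}^\ell(\Omega^1_X)$ differs from $\mathrm{Sym}^\ell(\Omega^1_X)$ by a twist, but since Lemma 8 is stated purely for $A_\ell = \mathrm{Sym}^\ell(\Omega^1_X)\otimes W$ this twisting does not enter here; it will only matter when Lemma 8 is applied. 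The semistability and positivity hypotheses on $\Omega^1_X$ enter exactly to guarantee that $\mathrm{Sym}^\ell(\Omega^1_X)$ and its tensor products with $W$ are themselves semistable (a theorem available in this characteristic range), so that all the slope comparisons in step (3) are actually valid.
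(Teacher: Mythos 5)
You have correctly identified the two essential ingredients: the rank-two Koszul-type sequence relating $\mathrm{Sym}^{\ell-1}$, $\mathrm{Sym}^{\ell}\otimes\Omega^1_X$ and $\mathrm{Sym}^{\ell+1}$, and the reformulation of the claim as $2s(\ell)\leq s(\ell-1)+s(\ell+1)$ using the affine-linearity of $\mu(A_\ell)$ in $\ell$. But there is a genuine gap in how you combine the pieces, and it is exactly the point you flag as "the main obstacle" in your step (3). You propose to take \emph{two separate images}: $\sF'=\nabla(\sF)\subseteq A_{\ell-1}\otimes\Omega^1_X$ and $\sF''=$ the image of $\sF\otimes\Omega^1_X$ in $A_{\ell+1}$. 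The inequalities you then invoke, namely that the defect of $\sF'$ is at most $s(\ell-1)$ and the defect of $\sF''$ is at most $s(\ell+1)$, are both \emph{upper} bounds; adding them gives an upper bound on the sum of the two defects, but nothing forces that sum to be at least $2s(\ell)$. The slope and rank of the image of a sheaf under a bundle map are not controlled in the direction you need (the image can drop rank, and its slope can move either way), so no amount of saturation or local $\mathrm{D}$-module bookkeeping will close this as stated.

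The paper's argument avoids the problem entirely and uses no semistability at all: it tensors the exact sequence $0\to A_{\ell-1}\otimes\omega_X\to A_{\ell}\otimes\Omega^1_X\to A_{\ell+1}\to 0$ (all three terms of the \emph{same} slope) against nothing, but restricts it to the single subsheaf $\sE_\ell\otimes\Omega^1_X$ of the middle term, obtaining one short exact sequence $0\to\sE'_\ell\to\sE_\ell\otimes\Omega^1_X\to\sE''_\ell\to 0$ in which $\sE'_\ell$ is the \emph{kernel} (the intersection with $A_{\ell-1}\otimes\omega_X$) and $\sE''_\ell$ is the image in $A_{\ell+1}$. Because $c_1$ and $\mathrm{rk}$ are additive in short exact sequences and the three ambient slopes coincide, the defect of the middle term, which is exactly $2s(\ell)$, splits \emph{identically} as the sum of the defects of $\sE'_\ell$ and $\sE''_\ell$; each summand is then bounded above by $s(\ell-1)$ and $s(\ell+1)$ respectively, and the convexity follows. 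The fix to your proposal is therefore to replace the image $\nabla(\sF)$ by the kernel of $\sF\otimes\Omega^1_X\to A_{\ell+1}$, so that the two pieces form one exact sequence and the defect identity, rather than two one-sided estimates, does the work.
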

\begin{proof}
Consider the exact sequence $$0\rightarrow A_{\ell-1}\otimes
\omega_X\rightarrow A_{\ell}\otimes \Omega^1_X\rightarrow
A_{\ell+1}\rightarrow 0  $$ where all of the bundles have the same
slope $(\ell+1)\cdot \mu(\Omega^1_X)+\mu(W)$. Assume $\sE_{\ell}$ is
the subsheaf of $A_{\ell}$ such that $$\mathrm{rk}(\sE_{\ell})\cdot
(\mu(\sE_{\ell})-\mu(A_{\ell}))=s(\ell).$$ Then the above exact
sequence induces an exact sequence $$0\rightarrow
\sE'_{\ell}\rightarrow \sE_{\ell}\otimes {\Omega^1_X}\rightarrow
\sE^{''}_{\ell}\rightarrow 0,$$ where $$\sE'_{\ell}\subset
A_{\ell-1}\otimes \omega_X,\ \ \ \ \ \  \sE^{''}_{\ell}\subset
A_{\ell+1}.$$
 A direct computation implies $$\aligned \rm{rk}(\sE_{\ell}\otimes {\Omega^1_X})&(\mu(\sE_{\ell}\otimes {\Omega^1_X})-\mu(A_{\ell}\otimes \Omega^1_X))\\
& =\mathrm{rk}(\sE'_{\ell})(\mu(\sE'_{\ell})-\mu(A_{\ell-1}\otimes \omega_X))+\mathrm{rk}(\sE^{''}_{\ell})(\mu(\sE^{''}_{\ell})-\mu(A_{\ell+1}))\endaligned$$
Consequently, we have
$$2s(\ell)\leq s(\ell-1)+s(\ell+1)$$  by the definition of $s(\ell)$.
Thus $$s(\ell)-s(\ell-1)\leq s(\ell+1)-s(\ell).$$

\end{proof}

Taking $\ell=p$ in $(\rm{ii})$ of 
Proposition 3.5 of \cite{Sun1}, we have the following exact sequence
$$0\rightarrow W\otimes F^*\Omega^1_X\rightarrow A_{p}\rightarrow
A_{p-2}\otimes \omega_X\rightarrow 0,$$ we obtain a upper bound for
$s(\ell)$ by using the above exact sequence. For simplicity, we define
$t=s(W\otimes F^*\Omega^1_X).$
\begin{lem}
Assume $p\geq r$. Then we have $$s(\ell)\leq \frac{t}{2}\cdot
(\ell-(p-r))$$ for $p-r\leq \ell\leq p-1.$
\end{lem}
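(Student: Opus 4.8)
The plan is to bound $s(\ell)$ by induction on $\ell$, starting from the base case $\ell = p - r$ and using the two structural exact sequences available: the sequence
$$0 \to W\otimes F^*\Omega^1_X \to A_p \to A_{p-2}\otimes \omega_X \to 0$$
obtained by taking $\ell = p$ in Proposition 3.5(ii) of \cite{Sun1}, together with its twisted analogues
$$0 \to W\otimes F^*\Omega^1_X \otimes \mathrm{Sym}^{j}(\Omega^1_X)\to A_{p+j} \to A_{p-2+j}\otimes \omega_X \to 0$$
for small $j\ge 0$, and the convexity inequality $s(\ell)-s(\ell-1)\le s(\ell+1)-s(\ell)$ from Lemma 9. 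First I would observe that since $\mathrm{Sym}^{\ell}(\Omega^1_X)$ is semistable for $\ell\le p-1$ (as $\Omega^1_X$ is semistable and we are in the range where symmetric powers of semistable bundles remain semistable when the rank is small relative to $p$ — here $\mathrm{rk}(\Omega^1_X)=2$, so $\mathrm{Sym}^\ell$ is semistable for $\ell\le p-1$) and $W$ is semistable, the tensor product $A_\ell = \mathrm{Sym}^\ell(\Omega^1_X)\otimes W$ is semistable for $\ell\le p-1$ provided $p$ is large enough; but this would give $s(\ell)=0$, which is too strong to be the intended statement, so instead the correct base point must be $\ell = p - r$, where I would establish $s(p-r)\le 0$ directly — or rather, I would set up the induction so that $s(p-r)$ is the anchor and the bound $\frac{t}{2}(\ell-(p-r))$ holds trivially there because the right-hand side vanishes.

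The key step is the inductive one. Suppose $s(\ell-1)\le \frac{t}{2}((\ell-1)-(p-r))$ for $\ell-1$ in the stated range. From the sequence
$$0 \to W\otimes F^*\Omega^1_X \otimes \mathrm{Sym}^{\ell-p}(\Omega^1_X)\to A_{\ell} \to A_{\ell-2}\otimes \omega_X \to 0$$
(valid for $\ell\ge p$, but what I actually want is the reverse-indexed version that expresses $A_\ell$ for $\ell$ slightly below $p-1$), I would take a maximal destabilizing subsheaf $\sE_\ell\subset A_\ell$ realizing $s(\ell)$, restrict it to the sub and quotient of the relevant short exact sequence, and use additivity of the quantity $\mathrm{rk}(\sF)(\mu(\sF)-\mu(A_\ell))$ across the sequence — exactly as in the proof of Lemma 9 — to obtain $s(\ell)\le s(\ell-2) + s(W\otimes F^*\Omega^1_X\otimes(\cdots))$. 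The bundle $F^*\Omega^1_X$ has slope $p\cdot\mu(\Omega^1_X)$, and since it is a Frobenius pullback it is a relatively simple object: $s(W\otimes F^*\Omega^1_X) = t$ by definition, and the extra symmetric-power twist contributes nothing new to $s$ because $\mathrm{Sym}^j(\Omega^1_X)$ is semistable in this range. Combining this with the convexity inequality of Lemma 9, which forces the sequence $s(\ell)$ to have nondecreasing successive differences, I can pin down that the differences $s(\ell)-s(\ell-1)$ grow at most linearly with slope $t/2$, starting from $0$ at $\ell = p-r$; summing the telescoping differences then yields $s(\ell)\le \frac{t}{2}(\ell-(p-r))$.

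The main obstacle I anticipate is getting the bookkeeping of the exact sequences to land precisely on the index $p-r$ rather than some nearby value, and verifying that $s$ is genuinely subadditive in the needed direction along those sequences — additivity of $\mathrm{rk}\cdot(\mu-\mu)$ holds term by term for a fixed subsheaf and its image/preimage, but to conclude $s(\ell)\le s(\ell-2)+t$ one needs the maximizing subsheaf for $A_\ell$ to induce subsheaves of $A_{\ell-2}\otimes\omega_X$ and of $W\otimes F^*\Omega^1_X\otimes\mathrm{Sym}^{\cdot}(\Omega^1_X)$ whose contributions are each bounded by the respective $s$-invariants, which is immediate, but then one must also check that equality in Lemma 9's convexity forces the linear (rather than merely convex) bound, and this requires knowing the correct value $s(p-r)$ at the left endpoint and $s(p-1)$ or the behaviour near $\ell=p$ at the right. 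I would handle this by using the sequence at $\ell=p$ to control $s(p)$ in terms of $t$ and $s(p-2)$, propagate downward, and invoke semistability of $A_\ell$ for $\ell$ near $0$ to anchor the low end, so that the convex sequence $s(0),\dots,s(p-1)$ is squeezed between a vanishing value at $\ell=p-r$ and the linear upper bound.
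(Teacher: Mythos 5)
Your proposal follows essentially the same route as the paper: use the exact sequence $0\to W\otimes F^*\Omega^1_X\to A_p\to A_{p-2}\otimes\omega_X\to 0$ to get $s(p)\le t+s(p-2)$, combine with the convexity $2s(p-1)\le s(p)+s(p-2)$ to obtain $s(p-1)-s(p-2)\le \tfrac{t}{2}$, propagate this bound on successive differences downward by convexity, anchor at $\ell=p-r$ where $s$ vanishes, and telescope. The only point you leave soft is the anchor itself: you first assert semistability of $A_\ell$ for all $\ell\le p-1$ (which is false in general in characteristic $p$) and then reverse-engineer the base point $p-r$ from the shape of the target inequality, whereas the actual justification is the Ilangovan--Mehta--Parameswaran theorem applied to $\mathrm{Sym}^\ell(\Omega^1_X)\otimes W$, whose rank condition $\mathrm{rk}(\mathrm{Sym}^\ell(\Omega^1_X))+\mathrm{rk}(W)=\ell+1+r\le p+1$ is exactly equivalent to $\ell\le p-r$; you should carry out this rank count to establish $s(p-r)=0$ rather than inferring it. The auxiliary ``twisted analogues'' of the exact sequence that you introduce are not needed and are never actually used in your argument.
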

\begin{proof}
Consider the exact sequence $$0\rightarrow W\otimes
F^*\Omega^1_X\rightarrow A_{p}\rightarrow A_{p-2}\otimes
\omega_X\rightarrow 0$$ where all the bundles have the same slope
$p\cdot \mu(\Omega^1_X)+\mu(W).$ As the same argument in Lemma 5, we
have $s(p)\leq t+ s(p-2)$. Combining with $2s(p-1)\leq s(p)+s(p-2)$,
we have
$$s(p-1)-s(p-2)\leq \frac{s(p)+s(p-2)}{2}-s(p-2)\leq \frac{t}{2}.$$
Then Lemma 5 implies that $$s(\ell)-s(\ell-1)\leq s(\ell+1)-s(\ell)\leq
\cdots \leq s(p-1)-s(p-2)\leq \frac{t}{2}.$$
But $\mathrm{Sym}^{\ell}(\Omega^{1}_X)$ is semistable for $\ell\leq p-1$ and
$$\mathrm{rk}(\mathrm{Sym}^{\ell}(\Omega_X^1))+\mathrm{rk}(W)=\ell+1+r\leq p+1$$
for $\ell \leq p-r$, thus we have $A_{\ell}$ is semistable for $\ell
\leq p-r$ by a theorem of Ilangovan-Mehta-Parameswaran (see Section
6 of \cite{A} for the precise statement): If $E_1$, $E_2$ are
semistable with $\mathrm{rk}(E_1)+\mathrm{rk}(E_2)\leq p+1$, then
$E_1\otimes E_2$ is semistable.  Consequently, we have $s(\ell)=0$
for $\ell \leq p-r$. Then the result is a direct computation.
\end{proof}


\begin{lem}
Assume $p\geq r+1$. Then we have $$t\leq (2r-1)\cdot
\mu(\Omega^1_X).$$
\end{lem}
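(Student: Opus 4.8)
The plan is to bound $t$ by first estimating how far the rank-$2$ Frobenius pull-back $F^*\Omega^1_X$ can be from being semistable, and then tensoring with the semistable bundle $W$. I would begin with the trivial case: if $F^*\Omega^1_X$ is itself semistable, then $\mathrm{rk}(W)+\mathrm{rk}(F^*\Omega^1_X)=r+2\le p+1$, so the Ilangovan--Mehta--Parameswaran theorem quoted above shows $W\otimes F^*\Omega^1_X$ is semistable and $t=0$. Hence I may assume $F^*\Omega^1_X$ is not semistable; let $L\subset F^*\Omega^1_X$ be its saturated maximal destabilizing subsheaf, which is then a line bundle, and set $M=F^*\Omega^1_X/L$, so that $\mu(L)+\mu(M)=2p\,\mu(\Omega^1_X)$ and $\mu(L)>p\,\mu(\Omega^1_X)>\mu(M)$.

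The crux is the estimate $\mu(L)\le\bigl(p+\tfrac12\bigr)\mu(\Omega^1_X)$. To get it I would use the canonical integrable connection $\nabla$ on $F^*\Omega^1_X$, which has zero $p$-curvature (\cite[Theorem~5.1]{K}). If $L$ were $\nabla$-invariant, $\nabla(L)\subseteq L\otimes\Omega^1_X$, then $L$ with the induced connection would still have zero $p$-curvature, so by Frobenius descent $L\cong F^*\bar L$ for a line subbundle $\bar L\subset\Omega^1_{X_1}$; but then $\mu(\bar L)=\mu(L)/p>\mu(\Omega^1_X)$, contradicting the semistability of $\Omega^1_X$. Therefore the second fundamental form $L\to M\otimes\Omega^1_X$ induced by $\nabla$ is a nonzero (and hence injective) $\sO_X$-linear map, so $L\otimes M^{-1}\hookrightarrow\Omega^1_X$; semistability of $\Omega^1_X$ gives $\mu(L)-\mu(M)\le\mu(\Omega^1_X)$, which combined with $\mu(L)+\mu(M)=2p\,\mu(\Omega^1_X)$ yields the claim.

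To finish, I would tensor by $W$: the subbundle $W\otimes L\subset W\otimes F^*\Omega^1_X$ and the quotient $W\otimes M$ are both semistable, being twists of the semistable $W$ by invertible sheaves, with $\mu(W\otimes L)>\mu(W\otimes M)$. Then for any subsheaf $\sF\subseteq W\otimes F^*\Omega^1_X$, setting $\sF_1=\sF\cap(W\otimes L)$ so that $\sF/\sF_1\hookrightarrow W\otimes M$, a short slope computation comparing $\deg\sF_1$ and $\deg(\sF/\sF_1)$ against $\mu(W\otimes F^*\Omega^1_X)=\mu(W)+\tfrac12(\mu(L)+\mu(M))$ gives $\mathrm{rk}(\sF)\bigl(\mu(\sF)-\mu(W\otimes F^*\Omega^1_X)\bigr)\le r\,\bigl(\mu(L)-p\,\mu(\Omega^1_X)\bigr)$. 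Hence $t\le r\bigl(\mu(L)-p\,\mu(\Omega^1_X)\bigr)\le\tfrac r2\,\mu(\Omega^1_X)$, which is $\le(2r-1)\,\mu(\Omega^1_X)$ since $r\ge1$ and $\mu(\Omega^1_X)>0$.

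I expect the second paragraph to be the only real obstacle: bounding the instability of the Frobenius pull-back $F^*\Omega^1_X$. The first and third steps are routine --- the third is just a slope chase exploiting semistability of $W$ together with the elementary description of $s(-)$ for a sheaf with a two-step Harder--Narasimhan filtration. The Frobenius-descent input in the middle is essential and relies on $\Omega^1_X$ being semistable for the same ample class $H$.
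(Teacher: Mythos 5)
Your proof is correct, and it takes a more self-contained (and quantitatively sharper) route than the paper. The paper's own argument is very short: it quotes Proposition 3.9 of \cite{Sun3} for the bound $\mathrm{I}(F^*\Omega^1_X)\leq \mu(\Omega^1_X)$, asserts that $\mathrm{I}(W\otimes F^*\Omega^1_X)=\mathrm{I}(F^*\Omega^1_X)$ when $p\geq r+1$ (this is where the Ilangovan--Mehta--Parameswaran theorem enters, to keep $W$ tensored with the HN factors semistable), and then uses the generic estimate $s(\sE)\leq(\mathrm{rk}(\sE)-1)\,\mathrm{I}(\sE)$ with $\mathrm{rk}=2r$ to land exactly on $(2r-1)\mu(\Omega^1_X)$. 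You instead reprove the instability bound from scratch: the Cartier-descent dichotomy (either the destabilizing line bundle $L$ is $\nabla$-horizontal, forcing a destabilizing subsheaf of $\Omega^1_{X_1}$, or the second fundamental form embeds $L\otimes M^{-1}$ into $\Omega^1_X$) is precisely the mechanism behind Sun's proposition, so you have filled in the citation rather than bypassed it. Your third step is genuinely better than the paper's: by exploiting that the HN filtration of $W\otimes F^*\Omega^1_X$ has only two steps with graded pieces of rank $r$, your slope chase gives $t\leq r\,(\mu(L)-\mu(M))/2\leq \tfrac{r}{2}\mu(\Omega^1_X)$, which is stronger than $(2r-1)\mu(\Omega^1_X)$ and would propagate to a weaker lower bound on $p$ in Theorem 1. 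Two small points worth making explicit if you write this up: when the second fundamental form is nonzero you should pass to $M^{**}$ (as $M$ is a priori only rank-one torsion free), which costs nothing since $c_1(M)=c_1(M^{**})$; and note that your main case (steps two and three) does not actually use $p\geq r+1$ at all --- that hypothesis is only needed in your trivial first case, and even there only to conclude $t=0$ rather than a bound of the required shape.
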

\begin{proof}
By the proposition 3.9 of \cite{Sun3}, We have $\rm{I}(F^*\Omega^1_X)\leq \mu(\Omega^1_X),$
If $p\geq r+1,$ then it is easy to check that $$\rm{I}(W\otimes F^*\Omega^1_X)=\rm{I}(F^*\Omega^1_X).$$
Thus we have
 $$t\leq (2r-1)\cdot \rm{I}(W\otimes F^*\Omega^1_X)\leq (2r-1)\cdot \mu(\Omega^1_X).$$
\end{proof}
Now, we finish the proof of Theorem 1.
\begin{proof} [Proof of Theorem 1:] Let us assume that $W$ is semistable firstly.

If $r_0=r_{2(p-1)}$, then Lemma 4 implies that there exists a
subsheaf $W'\subset W$ such that
$$\mu(\sE)-\mu(F_*W)\leq\frac{r_{2(p-1)}(\mathrm{rk}(F_*W)-\mathrm{rk}(\sE))}{p\cdot \rm{rk}(\sE)\cdot\rm{rk}(W)}(\mu(W')-\mu(W/W'))\leq 0$$

If $r_0>
r_{2(p-1)},$ then we have $$\sum^m_{\ell=0}(p-1-\ell)r_{\ell}\geq
(p-1)$$ by Lemma 3. Consider formula (4), it is enough to prove that
$$\sum^m_{\ell=0}r_{\ell}(\mu(\sF_{\ell})-\mu(\frac{V_{\ell}}{V_{\ell+1}})\leq (p-1)\cdot \mu(\Omega^1_X).$$
Recall that $V_{\ell}/V_{\ell+1}=W\otimes
\mathrm{T}^{\ell}(\Omega_X^1)$, where
$$\rm T^{\ell}(\Omega^1_X)= \left\{
\begin{array}{llll} {\rm Sym}^{\ell}(\Omega^1_X) &\mbox{when $\ell<p$}\\
{\rm
Sym}^{2(p-1)-\ell}(\Omega^1_X)\otimes\omega_X^{\ell-(p-1)}&\mbox{when
$\ell\ge p$.}
\end{array}\right.$$ Consequently, we have $V_{\ell}/V_{\ell+1}$ is
semistable for $\ell \leq p-r$ and $\ell \geq p+r-2$, and we only
need to prove
$$\sum^{p+r-3}_{\ell=p-r+1}r_{\ell}(\mu(\sF_{\ell})-\mu(\frac{V_{\ell}}{V_{\ell+1}}))\leq
(p-1)\cdot \mu(\Omega^1_X).$$

 But
$$r_{\ell}(\mu(\sF_{\ell})-\mu(\frac{V_{\ell}}{V_{\ell+1}}))\leq
s(2(p-1)-\ell)$$ for $p\leq \ell \leq p+r-3$. Combining with Lemma 6
and Lemma 7, we obtain that
$$\aligned
\sum^{p+r-3}_{\ell=p-r+1}r_{\ell}(\mu(\sF_{\ell})-\mu(\frac{V_{\ell}}{V_{\ell+1}}))=&\sum^{p-1}_{\ell=p-r+1}r_{\ell}(\mu(\sF_{\ell})-\mu(\frac{V_{\ell}}{V_{\ell+1}}))
+\sum^{p+r-3}_{\ell=p}r_{\ell}(\mu(\sF_{\ell})-\mu(\frac{V_{\ell}}{V_{\ell+1}}))\\
\leq &\sum^{p-1}_{\ell=p-r+1}s(\ell)+\sum^{p+r-3}_{\ell=p}s(2(p-1)-\ell)\\
\leq &\frac{t}{2}\cdot(1+\cdots +r-1)+\frac{t}{2}\cdot(r-2+\cdots +1)\\
\leq &
\frac{1}{2}(2r-1)(r-1)^2\cdot \mu(\Omega^1_X)\\
\leq& (p-1)\cdot \mu(\Omega^1_X)
\endaligned$$
 If $W$ is stable, we can prove that $F_{\ast}W$ is
stable similarly. The proof is completed.
\end{proof}
\begin{rmk} Keep the assumption of Theorem 1.
For $r=1$, the stability of $F_*W$ is proved by X. Sun in
\cite{Sun3}. As a slightly generalized version of
\cite[Theorem~3.1]{YH}, it is proved by X. Sun that $F_*(L\otimes
\Omega^1_X)$ is semistable when $L$ is a line bundle; moreover, if
$\Omega_X^1$ is stable, then $F_*(L\otimes \Omega^1_X)$ is stable
(see \cite[Theorem~4.9]{Sun3}). There is no restriction on the
characteristic $p$ for these results.
\end{rmk}

\section*{Acknowledgements} The authors would like to thank the Professor Xiaotao Sun for careful reading of this manuscript and for helpful comments, which improve the paper both in mathematics and presentations.

\noindent\address{Congjun Liu: Institute of Mathematics, Academy of Mathematics and Systems Science, Chinese Academy of Sciences, P. R. of China.}\\
Email: liucongjun@amss.ac.cn \\
\address{Mingshuo Zhou: Institute of Mathematics, Academy of Mathematics and Systems Science, Chinese Academy of Sciences, P. R. of China.}\ \ \
Email: zhoumingshuo@amss.ac.cn
\end{document}